\long\def\forget#1\forgotten{} %
\numberwithin{equation}{section} 
\def\m{\medskip}
\theoremstyle{plain} 
\newtheorem{thm}{Theorem}[section] 
\newtheorem{theorem}[thm]{Theorem}
\newtheorem{cor}[thm]{Corollary}
\newcommand\theoref{Theorem~\ref}
\newcommand\corref{Corollary~\ref} 
\newcommand\remref{Remark~\ref}
\def\secref{Section~\ref} 
\theoremstyle{definition} 
\newtheorem{definition}[thm]{Definition}
\newtheorem{rem}[thm]{Remark} 
\newtheorem{remark}[thm]{Remark}
\DeclareMathOperator{\inv}{{\rm inv}} 
\DeclareMathOperator{\DR}{{\rm DR}}
\def\1{\hbox{\rm\rlap {1}\hskip.03in{\rom I}}} 
\def\Bbbone{{\rm1\mathchoice{\kern-0.25em}{\kern-0.25em} 
{\kern-0.2em}{\kern-0.2em}I}} 
\def\m{\medskip}
\long\def\forget#1\forgotten{} %
\newcommand\ver[1]{\marginpar{\tiny Changed in Ver \VER}} 
\date{\today} 
\begin{document}  

\title[invariant contact structures]
{Invariant contact structures on $7$-dimensional nilmanifolds} 

\author[S. Kutsak]{Sergii Kutsak}

\address {Sergii Kutsak, Department of Mathematics, University of Florida,
358 Little Hall, Gainesville, FL 32601, USA}
\email{sergiikutsak@ufl.edu} 

\begin{abstract}
In this paper we give the list of all $7$-dimensional nilpotent real Lie algebras that admit a contact structure. Based on this list, we describe all $7$-dimensional nilmanifolds that admit an invariant contact structure.
\end{abstract}

\maketitle
   
\section{Introduction}

The goal of the paper is to describe all 7-dimensional nilmanifolds admitting an invariant contact structure, see \theoref{t:main}. To achieve this goal, we classify all 7-dimensional real nilpotent Lie algebra that admit a contact structure, see \corref{c:main}.  First, we recall some definitions.

\begin{definition}
 For a Lie algebra $\mathfrak{g}$ the upper central series is the increasing sequence of ideals  defined as follows:
$$ C_0(\mathfrak{g})=0, C_{i+1}(\mathfrak{g})=\{x \in \mathfrak{g}| [x,\mathfrak{g}] \subset C_i(\mathfrak{g})\}. $$
In other words $ C_{i+1}(\mathfrak{g}) $ is the inverse image under the canonical mapping of $ \mathfrak{g} $ onto $ \mathfrak{g}/C_i(\mathfrak{g}) $ of the center of $ \mathfrak{g}/C_i(\mathfrak{g}) $.

 The Lie algebra $ \mathfrak{g} $ is called { \em nilpotent} if there is an integer $k$ such that $ C_k(\mathfrak{g})= \mathfrak{g}.$ The minimal such $k$ is called the { \em index of nilpotency}, ~\cite{B}.
\end{definition}

A connected Lie group is nilpotent if and only if its Lie algebra is nilpotent.

\begin{definition}
A { \em nilmanifold} $M$ is a compact homogeneous space of the form $M=N/\Gamma$, where $N$ is a simply connected nilpotent Lie group and $\Gamma$ is a discrete cocompact subgroup in $N$, ~\cite{TO}. 
\end{definition}

For example, an $n$-dimensional torus $T^n=\mathbb{R}^n/\mathbb{Z}^n$ is obviously a nilmanifold.
If we consider the group $U_n(\mathbb{R})$ of upper triangular matrices having 1s along the diagonal then the quotient $ M=U_n(\mathbb{R})/U_n(\mathbb{Z}) $ is a nilmanifold, called the Heisenberg nilmanifold, where 
$U_n(\mathbb{Z}) \subset U_n(\mathbb{R}) $ is the set of matrices having integral entries.
\\ Let $ N/\Gamma $ be a nilmanifold and let $\mathfrak{n}$ be the Lie algebra of the Lie group $N$. It is well-known that the exponential map $ \rm exp:\mathfrak{n} \to N $ is a global diffeomorphism and the quotient map $ N \to N/\Gamma $ is the universal covering map. 
Hence every nilmanifold is the Eilenberg-MacLane space $ K(\Gamma,1) $. 
By Malcev's theorem a discrete group $\Gamma$ can be realized as the fundamental group of a nilmanifold if and only if it is a finitely presented  nilpotent torsion free group ~\cite{Ma}. 
 
Recall that a space $X$ is called {\em nilpotent} if the fundamental group $\pi_1(X)$ is nilpotent and the action of $\pi_1(X) $ on all homotopy groups $\pi_n(X), n\geq 1 $ is nilpotent.  It is clear that every nilmanifold $M=N/\Gamma$ is a nilpotent space since $\pi_1(M)$ is nilpotent and all higher homotopy groups $\pi_n(M), n \geq 2$ are trivial. Hence, it follows from the fundamental theorem of the rational homotopy theory ~\cite{DGMS} that two nilmanifolds have the same rational homotopy type if and only if the corresponding Chevalley-Eilenberg complexes are isomorphic.

\begin{theorem}[\cite{Ma}]\label{t:ma}
A simply connected nilpotent Lie group $N$ admits a discrete cocompact subgroup $\Gamma$ if and only if  there exists a basis $\{e_1, e_2, \dots, e_n\}$ of the Lie algebra $\mathfrak{n} $ of $N$ such that the structural constants $c_{ij}^k$ arising in brackets 
$$ \displaystyle{[e_i, e_j]=\sum_k{c_{ij}^ke_k}} $$
are rational numbers for all $i,j,k$. 
\end{theorem}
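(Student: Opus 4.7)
The plan is to prove both implications, with the Baker--Campbell--Hausdorff (BCH) formula as the central tool. Recall that for a nilpotent Lie algebra $\mathfrak{n}$, the BCH series
\[
\log(\exp X \cdot \exp Y) = X + Y + \tfrac{1}{2}[X,Y] + \tfrac{1}{12}\bigl([X,[X,Y]] + [Y,[Y,X]]\bigr) + \cdots
\]
terminates after finitely many terms and its coefficients are rational. Since $\exp\colon \mathfrak{n}\to N$ is a diffeomorphism for $N$ simply connected nilpotent (as already noted in the introduction), multiplication in $N$ becomes a polynomial law on $\mathfrak{n}$ whose coefficients are rational as soon as the bracket is expressed in a fixed basis with rational structure constants.

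For the ``if'' direction, assume the constants $c_{ij}^k$ are rational in a basis $\{e_1,\dots,e_n\}$. I would first replace each $e_i$ by $Me_i$ for a sufficiently large integer $M$ so that all denominators appearing in BCH products of $\exp(Me_i)$ are absorbed, exploiting the fact that only finitely many BCH terms appear in the nilpotent case. The subgroup $\Gamma\subset N$ generated by $\exp(Me_1),\dots,\exp(Me_n)$ is then closed under multiplication and inversion, and every element has a unique expression in coordinates of the second kind, $\gamma_1^{k_1}\cdots\gamma_n^{k_n}$ with $k_i\in\Z$. Discreteness follows from $\exp$ being a diffeomorphism onto its image. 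Cocompactness I would prove by induction on the index of nilpotency: the quotient $N/C_1$ (where $C_1$ is the center) is simply connected nilpotent of strictly smaller class, so the induction hypothesis applies to its image of $\Gamma$, while $C_1$ itself contains the standard lattice generated by the $\exp(Me_i)$ that lie in $C_1$.

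For the ``only if'' direction, assume $\Gamma \subset N$ is discrete and cocompact. The strategy is to construct a \emph{Malcev basis} of $\mathfrak{n}$ adapted to the upper central series $\{C_i\}$ of the present definition. First, one shows that $\Gamma\cap C_1$ is a uniform lattice in the center $C_1$ and that the image of $\Gamma$ in $N/C_1$ is a discrete cocompact subgroup there. Inductively on the nilpotency class, one then produces elements $\gamma_1,\dots,\gamma_n\in\Gamma$ whose logarithms $e_i:=\log\gamma_i$ form a basis of $\mathfrak{n}$ and such that every element of $\Gamma$ has a unique expression $\gamma_1^{k_1}\cdots\gamma_n^{k_n}$ with $k_i\in\Z$. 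Since each product $\gamma_i\gamma_j$ belongs to $\Gamma$, its Malcev coordinates are integers; expanding both sides via BCH and comparing with the bracket expansions of the $e_i$ forces the structural constants $c_{ij}^k$ to be rational.

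The main obstacle is the inductive step in the ``only if'' direction, namely verifying that $\Gamma\cap C_1$ is cocompact in $C_1$ and that the projected group remains cocompact in $N/C_1$. This is established by showing that the natural map $\Gamma/(\Gamma\cap C_1)\to N/C_1$ is a proper injection of a discrete subgroup with compact quotient, which in turn rules out sequences in $\Gamma\cap C_1$ escaping to infinity within a fundamental domain for $\Gamma$ in $N$. Once this step is secured, the remainder of the argument reduces to bookkeeping with the BCH formula and linear algebra over $\Q$.
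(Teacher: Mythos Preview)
The paper does not prove this statement at all: Theorem~\ref{t:ma} is quoted from Malcev's original paper~\cite{Ma} and is used as a black box (namely, to decide which Lie algebras in the list~$\mathcal{L}$ actually give rise to nilmanifolds). There is therefore nothing in the paper to compare your proposal against.

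That said, your sketch is a reasonable outline of the standard argument, essentially the one found in Malcev's paper and in later expositions (e.g.\ Raghunathan or Corwin--Greenleaf). A couple of points would need tightening if you were to write this out in full. In the ``if'' direction, the claim that a single rescaling by $M$ clears all BCH denominators is not automatic: iterated products $\gamma_{i_1}^{k_1}\cdots\gamma_{i_m}^{k_m}$ produce nested BCH expansions, and one must argue that nilpotency bounds the depth of commutators, so only finitely many rational coefficients ever appear and a uniform $M$ suffices. In the ``only if'' direction, you correctly identify the key step---that $\Gamma\cap C_1$ is a lattice in $C_1$ and the image of $\Gamma$ is a lattice in $N/C_1$---but your last paragraph is more of a restatement than an argument; the usual route is to show that $\Gamma$ meets every connected normal subgroup of $N$ in a lattice, which requires a compactness/volume argument rather than just ``ruling out sequences escaping to infinity.'' None of this is a fatal gap, but be aware that these are exactly the places where real work is hidden.
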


Let $ \mathfrak{g} $ be a Lie algebra with a basis $ \{X_1, \dots ,X_n\} $. Denote by $ \{x_1,\dots ,x_n\} $ the basis for $ \mathfrak{g}^* $ dual to $ \{X_1, \cdots ,X_n\} $. We obtain a differential $d$ on the exterior algebra $\Lambda\mathfrak{g}^*$
by defining it on degree 1 elements as 
$$ dx_k(X_i,X_j)=-x_k([X_i,X_j]) $$
and extending to $\Lambda\mathfrak{g}^*$ as a graded bilinear derivation. Then
$$ 
\displaystyle{[X_i,X_j]=\sum_{l}{c_{ij}^lX_l}} 
$$
where $c_{ij}^l$ are the structure constants of $ \mathfrak{g} $, and it follows from duality that 
$$ 
dx_k(X_i,X_j)=-c_{ij}^k.
$$
Hence on generators the differential is expressed as
$$ 
dx_k=-\sum_{i<j}{c_{ij}^kx_ix_j}. 
$$
Note that the condition $d^2=0$ is equivalent to the Jacobi identity in the Lie algebra. We call the differential graded algebra
$$ (\Lambda\mathfrak{g}^*,d) $$
the {\em Chevalley-Eilenberg } complex of the Lie algebra $\mathfrak{g} $.

A {\em contact structure} on a manifold $M$ of odd dimension $2n+1$ is a completely non-integrable $2n$-dimensional tangent plane distribution $\xi$. In the coorientable case the distribution may be defined by a differential 1-form $\alpha$ as $\xi=\ker \alpha$. Then the non-integrability condition can be expressed by the inequality $ \alpha \wedge (d\alpha)^n \neq 0 $, i.e. the form $ \alpha \wedge (d\alpha)^n$ is nowhere zero. Such differential form $\alpha$ is called a { \em contact differential form}. A contact structure can be viewed as an equivalence class of contact differential forms, where two forms $\alpha$ and $\alpha'$ are equivalent if and only if $\alpha'=f\alpha$ where $f$ is a nowhere zero smooth function on $M$. A { \em contact manifold} is a pair $(M^{2n+1},\eta)$ where $M$ is a smooth manifold of dimension $2n+1$ and $\eta$ is a contact structure on $M$,~\cite{E, MS}.

\begin{definition}
We say that a Lie algebra $\mathfrak{g}$ of dimension $2n+1$ admits a contact structure if there is an element $\eta$  of degree $1$ in the Chevalley-Eilenberg complex $ (\Lambda\mathfrak{g}^*,d) $ of $\mathfrak{g}$ such that $ \eta \wedge (d\eta)^n \neq 0.$ 
\end{definition}

\begin{definition}
We say that a contact structure  (resp. a contact form)  on a nilmanifold $N/\Gamma$ is left (right)  {\em invariant} if the contact structure (resp. a contact form) is invariant with respect to the left (resp. right) $N$-action on $N/\Gamma$. 
\end{definition} 

\begin{rem}\label{r:invar}
Since every invariant diferential form on a nilmanifold $ N/\Gamma $ is completely determined by its values on the Lie algebra  $\mathfrak{n}$ of $N$, we conclude that a nilmanifold $ N/\Gamma $ admits an invariant contact structure if and only if $\mathfrak{n}$ admits a contact structure.
\end{rem}
Note that any invariant contact structure on $N/\Gamma$ is coorientable.  

\begin{remark} 
Clearly, the kernel of invariant contact form gives us an invariant contact structure. Conversely, for any contact form $\alpha$ on $N/\Gamma$ we can perform the averaging of the lift of $\alpha$ to $N$ rescaled down by a nowhere zero function with the finite integral and get an invariant contact form $\overline \alpha$. Furthermore, $\overline\alpha$ and $\alpha$ yield the same contact structure provided $\alpha$ defines an invariant contact structure. In particular, a nilmanifold $M$ admits an invariant contact structure if and only if $M$ admits an invariant contact form. 
\end{remark}

To find all 7-dimensional nilmanifolds that admit an invariant contact structure we used the classification of 7-dimensional indecomposable nilpotent Lie algebras. Many attempts have been done on this topic. (We shall see below in \secref{s:decomp} that the decomposable nilpotent Lie algebras do not admit  a contact structure.) There are a few lists available: Safiullina  (1964, over $\mathbb{C}$)~\cite{S}, Romdhani (1985, over $\mathbb{R}$ and $\mathbb{C}$)~\cite{R}, Seeley (1988, over $\mathbb{C}$)~\cite{Se1}, Ancochea and Goze (1989, over $\mathbb{C}$)~\cite{AG}. The lists above are obtained by using different invariants. Carles ~\cite{C} introduced a new invariant - the weight system, compared the lists of Safiullina, Romdhani and Seeley, and found mistakes and omissions in all of them. Later in 1993 Seeley incorporated all the previous results and published his list over $\mathbb{C}$,~\cite{Se2}. In 1998 Gong used the Skjelbred-Sund method to classify all 7-dimensional nilpotent Lie algebras over $\mathbb{R}$, ~\cite{G}. We will use Gong's classification with some corrections from the list of Magnin,~\cite{M}.

\m To find an invariant contact structure on a nilmanifold, we first apply~\remref{r:invar}. So, we must check whether the nilpotent Lie algebras from the Gong's list  admit a contact structure. To achieve this goal,  we use the Sullivan minimal model theory and~\corref{c:ce} (see sections~\ref{s:min} and \ref{s:main}). Next, for a given 7-dimensional nilpotent Lie algebra having a contact structure,  we must check whether the corresponding simply connected Lie group contains a discrete cocompact  subgroup. For this we use the Malcev's criterion~(\theoref{t:ma}). 

\section{Acknowledgements}
I would like to thank my advisors Alexander Dranishnikov and Yuli Rudyak for many useful discussions and comments on this work.

\section{Preliminaries: Minimal Models}\label{s:min}

References for the minimal model theory are~\cite{DGMS, FHT, TO}.

\begin{definition}
A { \em differential graded algebra}  (DGA) over a field $k$ of characteristic $0$ is a graded vector space 
$ \displaystyle{A=\oplus_{i=0}^{\infty}A^i} $ with multiplications $ A^p \otimes A^q \to A^{p+q} $ which are associative and commutative in graded sense
$$ a \cdot b=(-1)^{|a|\cdot|b|}b \cdot a $$
where $|a|$ and $|b|$ denote the degrees of $a$ and $b$ respectively in the underlying graded vector space. The DGA $A$ also possesses a differential $ d:A^p \to A^{p+1} $ which is a graded derivation
$$ d(a \cdot b)=(da)\cdot b+(-1)^{|a|}a\cdot (db) $$
satisfying $d^2=0.$ A { \em morphism }in the category of the DGA's is an algebra homomorphism which commutes with the differential and respects grading.
\end{definition}

\begin{definition}
A DGA $ (\mathcal{M},d) $ is called a { \em model } for a DGA $(A,d_A)$  if there exists a DGA-morphism
$$ f:(\mathcal{M},d) \to (A,d_A) $$
that induces an isomorphism on cohomology. If $ (\mathcal{M},d) $ is freely generated in the sense that $ \mathcal{M}=\Lambda V $ for a graded vector space  $ \displaystyle{V=\oplus V^i } $ then it is called a free model for $(A, d_A)$, where the notation $ \Lambda V $means that, as a graded algebra, $\mathcal{M}$ is the tensor product of a polynomial algebra on even elements $ V^{\rm even} $ and an exterior algebra on odd elements $V^{\rm odd}$.
\end{definition}

\begin{definition}
A DGA $ (\mathcal{M},d) $ is called a {\em minimal model} of $(A,d_A)$ if the following conditions are satisfied:
\\ (i) $( \mathcal{M},d)=(\Lambda V,d) $ is a free model for $(A,d_A)$;
\\ (ii) $d$ is indecomposable in the following sense: there exists a basis $\{x_\mu: \mu \in I \} $ for some well-ordered index set $ I $ such that $\rm deg (x_{\lambda})\leq \rm deg (x_{\mu}) $ if $ \lambda<\mu $, and each $ d(x_{\lambda}) $ is expressed in terms of $ x_{\mu} $,  $\mu<\lambda$. It follows that $dx_{\mu}$ does not have a linear part.
\end{definition}

\begin{definition}
A minimal model of a smooth manifold $M$ is a minimal model of its de Rham DGA.
\end{definition}

We need the following fact,~\cite{CE}. Let $M=N/\Gamma$ be a nilmanifold. Then the complex of differential forms on $M$ can be identified with the the complex 
of differential forms on $N$ which are right invariant by the elements of $\Gamma$.

\begin{theorem}[\cite{N}]
Let $M=N/\Gamma$ be a nilmanifold.
The natural inclusion of the complex of right invariant differential forms on $N$ into the complex of the differential forms on $N/\Gamma$
$$ \Omega_{\DR}^{\inv}(N) \to \Omega_{\DR}(N/\Gamma)$$
induces an isomorphism on the cohomology level.
\end{theorem}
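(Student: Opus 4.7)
The plan is to prove this by induction on the dimension of $N$, exploiting the fact that a nilmanifold is canonically an iterated principal torus bundle over a nilmanifold of smaller dimension. The base case is $N=\R^n$ with $N/\Gamma = T^n$; here the right-invariant forms form a subcomplex isomorphic to $\Lambda\R^n$ with trivial differential, and its inclusion into $\Omega_{\DR}(T^n)$ is a quasi-isomorphism by the classical argument of averaging an arbitrary closed form against Haar measure on $T^n$ and observing that the averaging map is a cochain-homotopy inverse to the inclusion.

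For the inductive step, let $Z\subset N$ be the connected component of the center; since $N$ is nilpotent and nontrivial, $Z$ is a positive-dimensional vector subgroup $\R^m$, and its Lie algebra $\mathfrak{z}$ is an ideal of $\mathfrak{n}$. I would first verify two standard facts about lattices in nilpotent Lie groups: that $\Gamma_0 := \Gamma\cap Z$ is a lattice in $Z$ (so $Z/\Gamma_0$ is a torus $T^m$), and that $\Gamma' := \Gamma/\Gamma_0$ is a cocompact discrete subgroup of $N' := N/Z$. These produce a principal torus bundle
$$ T^m \;=\; Z/\Gamma_0 \;\longrightarrow\; N/\Gamma \;\longrightarrow\; N'/\Gamma'. $$

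The core of the argument is to compare two spectral sequences associated to this fibration. On the manifold side, the Leray--Serre spectral sequence has $E_2^{p,q}=H^p(N'/\Gamma';H^q(T^m))$ converging to $H^{p+q}(N/\Gamma;\R)$. On the Chevalley--Eilenberg side, the ideal $\mathfrak{z}\subset\mathfrak{n}$ induces a filtration on $\Lambda\mathfrak{n}^*$ whose associated Hochschild--Serre spectral sequence has $E_2^{p,q}=H^p(\mathfrak{n}/\mathfrak{z};H^q(\mathfrak{z}))$. The inclusion $\Lambda\mathfrak{n}^*\hookrightarrow\Omega_{\DR}(N/\Gamma)$ respects the two filtrations and therefore induces a morphism of spectral sequences. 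Because $Z$ is central, $\Gamma$ acts trivially on $H^*(T^m)$, so the monodromy is trivial; likewise, centrality of $\mathfrak{z}$ makes the coefficient action on the algebraic side vanish. Both $E_2$ pages thus take the shape (cohomology of base) $\otimes$ (cohomology of fibre). The inductive hypothesis applied to $N'/\Gamma'$ and the base case applied to the fibre $T^m$ together force the comparison map to be an isomorphism on $E_2$, and a standard comparison theorem for first-quadrant convergent spectral sequences upgrades this to the desired isomorphism in cohomology.

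The main obstacle I anticipate lies not in the algebra but in the geometric setup of the central extension: checking rigorously that $\Gamma\cap Z$ and $\Gamma/(\Gamma\cap Z)$ are really lattices in $Z$ and in $N/Z$ respectively, and matching the Leray filtration on $\Omega_{\DR}(N/\Gamma)$ with the pullback of the algebraic filtration so that the comparison of spectral sequences is actually legitimate. Once these compatibilities are established, the two spectral sequences are built by parallel recipes and the induction closes, with no analytic content entering beyond the torus base case.
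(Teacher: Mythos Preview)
The paper does not prove this theorem at all; it is simply quoted from Nomizu~\cite{N} as a background result, with no argument given. Your proposal is correct and is in fact a faithful outline of Nomizu's original proof: induction along the upper central series via the principal torus fibration coming from the center, combined with a comparison of the Leray--Serre spectral sequence of that fibration against the Hochschild--Serre spectral sequence of the extension $\mathfrak{z}\hookrightarrow\mathfrak{n}\twoheadrightarrow\mathfrak{n}/\mathfrak{z}$. The two technical points you isolate---that $\Gamma\cap Z$ and $\Gamma/(\Gamma\cap Z)$ are lattices in $Z$ and $N/Z$, and that the inclusion of invariant forms respects the two filtrations---are precisely the ingredients Nomizu supplies, the former via Mal'cev's rationality theory for lattices in simply connected nilpotent groups. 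So there is nothing in the present paper to compare against, but your sketch is sound and reproduces the cited argument essentially as written.
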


\begin{cor}[\cite{TO}]\label{c:ce}
The minimal model of a compact nilmanifold $N/\Gamma$ is isomorphic to the Chevalley-Eilenberg complex of the Lie algebra 
$ \mathfrak{n} $ of $ N $.
\end{cor}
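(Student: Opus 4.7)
The plan is to combine the two ingredients already on the page: Nomizu's theorem identifies $H^*(\Omega_{\DR}^{\inv}(N))$ with $H^*(N/\Gamma;\R)$, while the Chevalley--Eilenberg complex $(\Lambda\mathfrak{n}^*,d)$ is, by construction, a purely algebraic model for the invariant forms. I therefore need to check two things: (a) there is a DGA-isomorphism $(\Lambda\mathfrak{n}^*,d)\cong\Omega_{\DR}^{\inv}(N)$, and (b) $(\Lambda\mathfrak{n}^*,d)$ satisfies the two conditions in the definition of a minimal model. Once both are in hand, the composition
\[(\Lambda\mathfrak{n}^*,d)\;\xrightarrow{\cong}\;\Omega_{\DR}^{\inv}(N)\;\hookrightarrow\;\Omega_{\DR}(N/\Gamma)\]
is a quasi-isomorphism (by Nomizu) out of a minimal DGA, which is exactly the content of the corollary.

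For step (a), send an invariant form $\omega$ to its value $\omega_e\in\Lambda^*\mathfrak{n}^*$ at the identity; this is a graded algebra isomorphism $\Omega_{\DR}^{\inv}(N)\to\Lambda\mathfrak{n}^*$. To match differentials, apply the Cartan formula $d\omega(X,Y)=X\omega(Y)-Y\omega(X)-\omega([X,Y])$ to an invariant $1$-form and two invariant vector fields $X=X_i,\,Y=X_j$: the first two terms vanish since $\omega(X_i)$ and $\omega(X_j)$ are constant on $N$, leaving $d\omega(X_i,X_j)=-\omega([X_i,X_j])$, which is the Chevalley--Eilenberg formula. A minor bookkeeping point is that Nomizu's theorem is stated above for right-invariant forms while the Cartan computation is cleanest for left-invariant ones, but the two sub-DGAs of $\Omega_{\DR}(N)$ are canonically identified via pullback along $g\mapsto g^{-1}$, so no real issue arises.

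For step (b), freeness is immediate: $\Lambda\mathfrak{n}^*$ is the exterior algebra on the degree-$1$ space $V=\mathfrak{n}^*$. The only genuine content, and the sole place nilpotency enters, is the decomposability of $d$. Consider the lower central series $\mathfrak{n}=\mathfrak{n}^1\supset\mathfrak{n}^2\supset\cdots\supset\mathfrak{n}^r\supset\mathfrak{n}^{r+1}=0$ and choose a basis $\{X_1,\dots,X_n\}$ adapted to this filtration, ordered so that $\ell(k):=\max\{s:X_k\in\mathfrak{n}^s\}$ is non-decreasing in $k$. Because $[\mathfrak{n}^a,\mathfrak{n}^b]\subset\mathfrak{n}^{a+b}$, the structure constants satisfy $c_{ij}^k=0$ unless $\ell(k)\geq\ell(i)+\ell(j)$; in particular any nonzero $c_{ij}^k$ forces $\ell(i),\ell(j)<\ell(k)$ and hence $i,j<k$ in our ordering. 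Dualising, $dx_k=-\sum_{i<j}c_{ij}^k\,x_i x_j$ involves only $x_l$ with $l<k$, which is precisely the indecomposability condition in the definition of a minimal model. This verification is the main---and essentially the only---step of the argument; the rest is assembling the pieces already established on the page.
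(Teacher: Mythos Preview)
Your proof is correct and is exactly the argument the paper intends: it states Nomizu's theorem immediately before and cites the corollary from \cite{TO} without further proof, so the ``paper's proof'' is just the standard unpacking you have written out---identify invariant forms with $(\Lambda\mathfrak{n}^*,d)$, invoke Nomizu for the quasi-isomorphism, and use nilpotency (via the lower central series) to verify the indecomposability condition. The one cosmetic point worth noting is that your ordering argument needs (and has) the strict inequality $\ell(i)<\ell(k)$ to conclude $i<k$ from a merely non-decreasing ordering; since $\ell(j)\ge 1$ this is indeed strict, so the step is fine.
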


\section{Contact structures on decomposable nilpotent Lie algebras}\label{s:decomp}

\begin{definition} 

A Lie algebra $ \mathfrak{n} $ is called { \em decomposable} if it can be represented as a direct sum of its ideals.
\end{definition}

\begin{theorem}\label{t:indecom} 
Let $N/\Gamma$ be a nilmanifold of dimension $2n+1, n \geq 0$. If the Lie algebra $ \mathfrak{n} $ of the Lie group $N$ is decomposable then the nilmanifold $N/\Gamma$ does not admit an invariant contact structure.
\end{theorem}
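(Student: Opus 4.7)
The plan is to apply Remark \ref{r:invar} to reduce the statement to showing that a decomposable nilpotent Lie algebra $\mathfrak{n}$ of odd dimension $2n+1$ admits no element $\eta \in \mathfrak{n}^*$ with $\eta \wedge (d\eta)^n \neq 0$ in its Chevalley--Eilenberg complex. The crucial observation is that such an $\mathfrak{n}$ has center of dimension at least $2$. Indeed, writing $\mathfrak{n} = \mathfrak{n}_1 \oplus \mathfrak{n}_2$ as a direct sum of two nonzero ideals, each $\mathfrak{n}_i$ is itself nilpotent and hence has nontrivial center, so
$$ Z(\mathfrak{n}) \supseteq Z(\mathfrak{n}_1) \oplus Z(\mathfrak{n}_2) $$
has dimension at least $2$.

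The theorem therefore reduces to a general lemma: if a Lie algebra $\mathfrak{g}$ of odd dimension $2n+1$ satisfies $\dim Z(\mathfrak{g}) \geq 2$, then no $\eta \in \mathfrak{g}^*$ is a contact form in $\Lambda \mathfrak{g}^*$. To prove this, fix a candidate $\eta$. For any $X \in Z(\mathfrak{g})$ and any $Y \in \mathfrak{g}$, the Chevalley--Eilenberg formula gives $d\eta(X,Y) = -\eta([X,Y]) = 0$, so $\iota_X d\eta = 0$. Since $\iota_X$ is a graded derivation of degree $-1$ and $d\eta$ has even degree, an induction on the Leibniz expansion of $\iota_X\bigl(d\eta \wedge (d\eta)^{n-1}\bigr)$ yields $\iota_X(d\eta)^n = 0$. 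Hence
$$ \iota_X\bigl(\eta \wedge (d\eta)^n\bigr) = \eta(X)(d\eta)^n - \eta \wedge \iota_X(d\eta)^n = \eta(X)(d\eta)^n. $$

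Because $\dim Z(\mathfrak{g}) \geq 2$, the linear functional $\eta|_{Z(\mathfrak{g})}$ has a nontrivial kernel, so I can pick a nonzero $X \in Z(\mathfrak{g})$ with $\eta(X) = 0$. For this $X$ the computation above gives $\iota_X(\eta \wedge (d\eta)^n) = 0$. On the other hand, $\eta \wedge (d\eta)^n$ is by assumption a nonzero form of top degree $2n+1$, and contracting a nonzero top form with a nonzero vector is always nonzero (extend $X$ to a basis of $\mathfrak{g}$ and read off the dual basis). This contradicts the contact condition and finishes the proof.

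The argument is short and no step looks like a serious obstacle; the only place care is needed is the Leibniz-rule induction showing that $\iota_X$ annihilates $(d\eta)^n$ once $\iota_X d\eta = 0$, which is a routine consequence of the graded Leibniz rule in $\Lambda \mathfrak{g}^*$. A minor alternative would be to work coordinate-wise using the tensor product decomposition $\Lambda \mathfrak{n}^* \cong \Lambda \mathfrak{n}_1^* \otimes \Lambda \mathfrak{n}_2^*$ and a parity argument on the factor of even dimension, but the center-based argument is cleaner and does not require separating cases on the parities of $\dim \mathfrak{n}_1$ and $\dim \mathfrak{n}_2$.
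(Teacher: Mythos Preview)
Your proof is correct, and it takes a genuinely different route from the paper's.

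The paper argues coordinate-wise. It picks, in each summand $V$ and $W$, an ordered basis adapted to nilpotency so that $dx_p$ involves only $x_i x_j$ with $i<j<p$ (and similarly for the $y$'s). Then the top generators $x_s$ and $y_t$ never occur in $d\omega$, so $d\omega$ lives in the exterior algebra on only $2n-1$ generators and $(d\omega)^n=0$ outright. Your argument instead extracts the invariant fact behind this: a decomposable nilpotent $\mathfrak{n}$ has $\dim Z(\mathfrak{n})\ge 2$, and any Lie algebra with two-dimensional center admits no contact element, which you prove cleanly via the interior product identity $\iota_X(\eta\wedge(d\eta)^n)=\eta(X)(d\eta)^n$ for central $X$.

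What each approach buys: the paper's computation is slightly stronger in that it shows $(d\omega)^n=0$, not merely $\omega\wedge(d\omega)^n=0$, and it stays entirely within the ordered-basis framework used in the rest of the paper. Your argument is more conceptual, basis-free, and strictly more general---it applies to \emph{any} Lie algebra with center of dimension at least two, nilpotent or not, decomposable or not. The Leibniz-rule step and the ``nonzero top form contracted with nonzero vector is nonzero'' step are both routine, so there are no gaps.
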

\begin{proof}
 Let $ \mathfrak{n}=V \oplus W $ where $V$ and $W$ are ideals in $ \mathfrak{n} $ so that $ [X,Y]=0 $ for all $ X \in V, Y \in W$. Let ${X_1,\dots,X_s},{Y_1,\dots,Y_t}$ be the basis of $V$ and $W$, respectively. Denote by ${x_1,\dots,x_s},{y_1,\dots,y_t}$
the dual basis of $V^*$ and $W^*$, respectively. Consider the Chevalley-Eilenberg complex $ (\Lambda\mathfrak{n}^*, \delta) $ of the Lie algebra $ \mathfrak{n} $. Then
$$ \displaystyle{\delta x_p=\sum_{i<j}{c_{ij}^px_ix_j}, j<p, p=1,\dots,s} $$
$$ \displaystyle{\delta y_q=\sum_{l<m}{c_{lm}^qy_ly_m}, m<q, q=1,\dots,t}, $$
because $ [X_p,Y_q]=0$ for all $ p=1,\dots,s,q=1,\dots,t $.
Hence, without loss of generality we may assume that $ \{x_1,x_2,\dots,x_s,y_1,y_2,\dots,y_t\} $ is the basis of $ \mathfrak{n}^* $ that is ordered to satisfy the indecomposability condition of the minimal model $ (\Lambda\mathfrak{n}^*, \delta) $. Consider an arbitrary $1$-form 
$$\displaystyle{\omega=\sum_{i=1}^{s}{a_ix_i}+\sum_{j=1}^{t}{b_jy_j},\quad a_i,b_j \in \mathbb{R}}$$
on $ \mathfrak{n} $. Then the derivative $\delta\omega$ will contain no terms with $x_s$ or $y_t$. Thus, the $2n$-form $(\delta\omega)^n$ vanishes because it is a linear combination of terms each of which is a product of $2n$ $1$-forms from the 
set 
$$\{x_1,x_2,\dots,x_{s-1},y_1,y_2,\dots,y_{t-1}\} $$
of cardinality $2n-1$. It follows that the nilmanifold $N/\Gamma$ cannot admit an invariant differential form $\eta$ such that $ \eta \wedge (d\eta)^n \neq 0 $.
\end{proof}

\section{Contact structures on indecomposable nilpotent Lie algebras}\label{s:main}

 Now we discuss the existence of  contact structures on indecomposable Lie algebras. We have proved that there are no contact structures on decomposable nilpotent Lie algebras, see \theoref{t:indecom} above. All 7-dimensiomal indecomposable nilpotent Lie algebras over the real field have been classified, ~\cite{G,M}. 
Over reals there are 138 non-isomorphic 7-dimensiomal indecomposable nilpotent Lie algebras and, in addition, 9 one-parameter families.
For the one-parameter families, a parameter $\lambda$  is used to denote a structure constant
that may take on arbitrary values (with some exceptions) in $\mathbb{R}.$  An invariant $ I(\lambda) $ is given
for each family in which multiple values of $\lambda$ yield isomorphic Lie algebras, i.e., if $ I(\lambda_1) = I(\lambda_2)$,
then the two corresponding Lie algebras are isomorphic, and conversely. All one-parameter families admit a contact structure (with some restrictions on a parameter $\lambda$) and only $35$ Lie algebras admit a contact structure. We conduct a detailed computation for the algebra (1357C) and the one-parameter family (147E).  Here we use the same notation for the Lie algebras as in ~\cite{G}, where the Lie algebras are listed in accordance with their upper central series dimensions.  For instance, the Lie algebras that have the upper central series dimensions 1,4,7 are listed as follows: (147A), (147B), (147C), etc. For the sake of brevity we will drop the sign of the wedge product (for instance, $x_ix_j $ means $ x_i \wedge x_j $ ).

\begin{rem}
Let $\mathfrak{n}$ be a 7-dimensional nilpotent Lie algebra with the basis $\{X_i:i=1, \dots,7\} $ and the corresponding dual basis $\{x_i:i=1, \dots,7\} $. Consider the Chevalley-Eilenberg complex $ (\Lambda \mathfrak{n}^*,\delta)$ of the Lie algebra $\mathfrak{n} $. If one cannot find nonzero structural constants $ c_{i_1j_1}^{k_1},c_{i_2j_2}^{k_2},c_{i_3j_3}^{k_3}$ such that $\{i_1,j_1,i_2,j_2,i_3,j_3\}=\{1,2,3,4,5,6\} $ then $\mathfrak{n}$
does not admit a contact structure because for every $1$-form 
$$ \displaystyle{\omega=\sum{a_lx_l}, a_l \in \mathbb{R}, l=1, \dots, 7} $$
 on $\mathfrak{n}$ the $6$-form $(\delta \omega)^3$ contains at most five distinct elements from the basis $ \{x_1, \dots ,x_7\} $ and therefore $(\delta \omega)^3=0$. For this reason $103$ $7$-dimensional indecomposable nilpotent Lie algebras that are listed in  ~\cite{G}, pp.$53-64$ do not admit a contact structure and therefore we do not list them in our paper. Thus, if a 7-dimensional simply connected nilpotent  Lie group $N$ admits a discrete cocompact subgroup $ \Gamma$ and the Lie algebra $\mathfrak{n}$ of $ N $ falls into this list of $103$ $7$-dimensional indecomposable nilpotent Lie algebras then the nilmanifold $ N/\Gamma $ does not admit an invariant contact structure.
\end{rem}

Now we show that the Lie algebra (1357C) and the one-parameter family (147E) admit a contact structure.
First we consider the Lie algebra (1357C). Let  $\{X_i:i=1, \dots,7\} $ be a basis of the Lie algebra (1357C) and  $\{x_i:i=1, \dots,7\} $ be the corresponding dual basis. Note that $(1357)$ stands for the upper central series dimensions. The nontrivial Lie brackets of the Lie algebra (1357C) are defined as follows: \\

\begin{tabular}{ l l l l}
 $[X_1,X_2]=X_4,$ & $[X_1,X_4]=X_5,$ & $ [X_1,X_5]=X_7,$ & $[X_3,X_6]=X_7,$ \\
 $ [X_2,X_3]=X_5, $ & $ [X_2,X_4]=X_7,$ & $ [X_3,X_4]=-X_7. $ \\
  
\end{tabular} \vspace{0.4cm}

Then we can find the differential of the Chevalley-Eilenberg model of the Lie algebra (1357C): \vspace{0.4cm} \\ 
\begin{tabular}{ l l }
  $ dx_i=0, i=1,2,3, $ & $ dx_4=x_1x_2 $ \\ 
  $ dx_5=x_2x_3+x_1x_4,$ & $ dx_7=x_1x_5+x_2x_4-x_3x_4+x_3x_6$ . 
  
\end{tabular}  
\vspace{0.4cm}
\\ Let $ \eta=x_7$. A straightforward computation shows that $\eta \wedge (d\eta)^3=6x_1x_2x_3x_4x_5x_6x_7 \neq 0. $ Hence the algebra (1357C) admits a contact structure. Similarly one can see that there are $34$ other algebras that admit a contact structure and the contact form $\eta$ is given by $\eta=x_7$, except for the algebra (12457L) for which $\eta=x_7+x_6$.

\m Consider the one-parameter family (147E) with invariant $ I(\lambda)=\frac{(1-\lambda+\lambda^2)^3}{\lambda^2(\lambda-1)^2}$, $\lambda \neq 0,1$ and the Lie bracket defined as follows: \\

\begin{tabular}{ l l l  }
 $ [X_1,X_2]=X_4, $ & $ [X_1,X_3]=-X_6, $ & $ [X_1,X_5]=-X_7,$ \\
 $[X_2,X_3]=X_5,$ & $[X_2,X_6]=\lambda X_7,$ & $ [X_3,X_4]=(1-\lambda)X_7.$ \\
\end{tabular}  \vspace{0.4cm}

Then we can find the differential of the Chevalley-Eilenberg model of the one-parameter family (147E): \vspace{0.4cm} \\ 
\begin{tabular}{ l l l }
  $ dx_i=0, i=1,2,3, $ & $ dx_4=x_1x_2 $ & $dx_7=\lambda x_2x_6+(1-\lambda)x_3x_4-x_1x_5,$  \\ 
  $ dx_6=-x_1x_3,$ & $ dx_5=x_2x_3.$ \\
  
\end{tabular}  
\vspace{0.4cm}

 Let $ \eta=x_7$. Then $\eta \wedge (d\eta)^3=6\lambda(1-\lambda)x_1x_2x_3x_4x_5x_6x_7 \neq 0,$ for $ \lambda \neq 0,1. $ Hence the one-parameter family (147E) admits a contact structure for any $\lambda \neq 0,1.$ Similarly one can see that the remaining $8$ families admit a contact structure and the contact form $\eta$ is given by $\eta=x_7$.  Eventually, we get the following result.

\begin{theorem}\label{t:main} The following is a complete and non-redundant list $\mathcal{L}$ of all $7$-dimensional nilpotent Lie algebras that admit a contact structure:

\rm
\begin{center}
Upper Central Series Dimensions $ (17) $
\end{center}
\begin{tabular}{ l l l l }
(17) & $ [X_1,X_2]=X_7,$  & $ [X_3,X_4]=X_7, $  & $ [X_5,X_6]=X_7.$  \\
\end{tabular}

\begin{center}
Upper Central Series Dimensions $ (157) $
\end{center}
\begin{tabular}{ l l l l }
(157) & $ [X_1,X_2]=X_3, $ & $ [X_1,X_3]=X_7, $ & $ [X_2,X_4]=X_7,$ \\
 & $ [X_5,X_6]=X_7.$ \\
\end{tabular} 

\begin{center}
Upper Central Series Dimensions $ (147) $
\end{center}
\begin{tabular}{ l l l l }
(147A) & $ [X_1,X_2]=X_4,$ & $[X_1,X_3]=X_5,$ & $ [X_1,X_6]=X_7,$ \\
    & $[X_2,X_5]=X_7,$ & $    [X_3,X_4]=X_7. $ \\
(147A$_1$) & $[X_1,X_2]=X_4,$ & $[X_1,X_3]=X_5,$ & $ [X_1,X_6]=X_7,$ \\
  & $ [X_2,X_4]=X_7,$ & $  [X_3,X_5]=X_7. $ \\
(147B) & $[X_1,X_2]=X_4,$ & $[X_1,X_3]=X_5,$ & $ [X_1,X_4]=X_7,$ \\
  & $ [X_2,X_6]=X_7,$ & $   [X_3,X_5]=X_7. $ \\
(147D) & $[X_1,X_2]=X_4,$ & $[X_1,X_3]=-X_6,$ & $ [X_1,X_5]=X_7,$ \\
  & $ [X_1,X_6]=X_7,$ & $  [X_2,X_3]=X_5,$ & $ [X_2,X_6]=X_7, $ \\
    & $ [X_3,X_4]=-2X_7. $ \\
(147E) &  $ [X_1,X_2]=X_4, $ & $ [X_1,X_3]=-X_6, $ & $ [X_1,X_5]=-X_7,$ \\
   & $[X_2,X_3]=X_5,$ & $[X_2,X_6]=\lambda X_7,$ & $ [X_3,X_4]=(1-\lambda)X_7,$ \\
     & $ I(\lambda)=\frac{(1-\lambda+\lambda^2)^3}{\lambda^2(\lambda-1)^2}$, & $\lambda \neq 0,1.$ \\    
\end{tabular} 

\begin{center}
\end{center}
\begin{tabular}{ l l l l }

(147E$_1$) & $ [X_1,X_2]=X_4,$ & $[X_2,X_3]=X_5,$ & $ [X_1,X_3]=-X_6,$ \\
  & $ [X_1,X_6]=-\lambda X_7, $ & $ [X_2,X_5]=\lambda X_7,$ & $[X_2,X_6]=2 X_7, $ \\
    & $  [X_3,X_4]=-2X_7, $ & $ \lambda > 1. $ \\
\end{tabular}

\begin{center}
Upper Central Series Dimensions $ (1457) $
\end{center}
\begin{tabular}{ l l l l }
(1457B) & $[X_1,X_i]=X_{i+1},i=2,3,$ & $[X_1,X_4]=X_7,$ & $ [X_2,X_3]=X_7,$ \\
  & $  [X_5,X_6]=X_7.$ \\
\end{tabular} 

\begin{center}
Upper Central Series Dimensions $ (137) $
\end{center}
\begin{tabular}{ l l l l }
(137B) & $[X_1,X_2]=X_5,$ & $[X_1,X_5]=X_7,$ & $ [X_2,X_4]=X_7,$ \\
  & $ [X_3,X_4]=X_6,$ & $  [X_3,X_6]=X_7.  $ \\
(137B$_1$) & $ [X_1,X_3]=X_5,$ & $[X_1,X_4]=X_6,$ & $ [X_1,X_5]=X_7,$ \\
    &  $ [X_2,X_3]=-X_6,$ & $ [X_2,X_4]=X_5,$ & $[X_2,X_6]=X_7,$ \\ 
     & $ [X_3,X_4]=X_7. $ \\
(137D) & $[X_1,X_2]=X_5,$ & $[X_1,X_4]=X_6,$ & $ [X_1,X_6]=X_7,$ \\
  & $ [X_2,X_3]=X_6,$ & $  [X_2,X_4]=X_7, $ & $[X_3,X_5]=-X_7. $ \\ 
\end{tabular} 

\begin{center}
Upper Central Series Dimensions $ (1357) $
\end{center}

\begin{tabular}{ l l l l }
(1357A) & $[X_1,X_2]=X_4,$ & $[X_1,X_4]=X_5,$ & $ [X_1,X_5]=X_7,$ \\
  & $ [X_2,X_3]=X_5,$ & $  [X_2,X_6]=X_7,$ & $ [X_3,X_4]=-X_7. $ \\
(1357C) & $[X_1,X_2]=X_4,$ & $[X_1,X_4]=X_5,$ & $ [X_1,X_5]=X_7,$ \\
  & $ [X_2,X_3]=X_5, $ & $ [X_2,X_4]=X_7,$ & $ [X_3,X_4]=-X_7, $ \\
    & $[X_3,X_6]=X_7.$ \\
(1357D) & $[X_1,X_2]=X_3,$ & $[X_2,X_i]=X_{i+2},i=3,4,$ & $ [X_1,X_6]=X_7,$ \\
  & $ [X_2,X_5]=X_7,$ & $  [X_3,X_4]=X_7. $ \\ 
(1357F) & $[X_1,X_2]=X_3,$ & $[X_2,X_i]=X_{i+2},i=3,4,$ & $ [X_1,X_3]=X_7,$ \\
  & $ [X_2,X_5]=X_7,$ & $  [X_4,X_6]=-X_7. $ \\  
   (1357F$_1$)  & $ [X_1,X_2]=X_3,$ & $[X_2,X_i]=X_{i+2}, i=3,4,$ & $ [X_1,X_3]=X_7,$ \\
  & $ [X_2,X_5]=X_7,$ & $  [X_4,X_6]=X_7. $   \\ 
(1357H) & $[X_1,X_2]=X_3,$ & $[X_1,X_4]=X_6, $ & $ [X_1,X_6]=X_7,$ \\
  & $ [X_2,X_3]=X_5, $ & $ [X_2,X_5]=X_7,$ & $ [X_2,X_6]=X_7, $ \\
    & $ [X_3,X_4]=-X_7.$ \\
(1357J) & $[X_1,X_2]=X_3,$ & $[X_1,X_3]=X_7,$ & $ [X_1,X_4]=X_6,$ \\ 
  & $ [X_2,X_3]=X_5, $ & $ [X_2,X_5]=X_7,$ & $ [X_4,X_6]=X_7. $ \\
 (1357L) & $[X_1,X_2]=X_3,$ & $ [X_1,X_i]=X_{i+2},i=3,4,5,$ & $ [X_2,X_3]=X_7, $ \\
  & $  [X_2,X_4]=X_5,$ & $ [X_2,X_6]=\frac{1}{2}X_7,$ & $  [X_3,X_4]=\frac{1}{2}X_7.$ \\
  (1357M) &  $[X_1,X_2]=X_3,$ & $ [X_1,X_i]=X_{i+2},i=3,4,5,$ & $ [X_2,X_4]=X_5, $ \\
  & $  [X_2,X_6]=\lambda X_7,$ & $ [X_3,X_4]=(1-\lambda)X_7,$ & $ \lambda \neq 0,1. $ \\   
 (1357N) & $[X_1,X_2]=X_3,$ & $ [X_1,X_i]=X_{i+2},i=3,4,5,$ & $ [X_2,X_3]=\lambda X_7,$ \\
  & $ [X_2,X_4]=X_5,$ & $ [X_3,X_4]=X_7,$ & $ [X_4,X_6]=X_7.$ \\   
   (1357P) & $[X_1,X_2]=X_3,$ & $ [X_1,X_i]=X_{i+2},i=3,5,$ & $ [X_2,X_3]=X_6,$ \\
  & $  [X_2,X_4]=X_5,$ & $ [X_2,X_6]=X_7,$ & $  [X_3,X_4]=X_7.$ \\  
  \end{tabular} 
\begin{center}
\end{center}
\begin{tabular}{ l l l l }

   \end{tabular}

\begin{center}
\end{center}
\begin{tabular}{ l l l l } 
  (1357P$_1$) & $ [X_1,X_2]=X_3,$ & $ [X_1,X_i]=X_{i+2}, i=3,5,$ & $ [X_2,X_3]=X_6,$ \\
  & $  [X_2,X_4]=X_5,$ & $[X_2,X_6]=-X_7,$ & $  [X_3,X_4]=X_7.  $   \\   
   (1357QRS$_1$)  & $ [X_1,X_2]=X_3,$ & $ [X_1,X_3]=X_5,$ & $ [X_1,X_4]=X_6,$ \\
  & $  [X_1,X_5]=X_7,$ & $[X_2,X_3]=-X_6,$ & $  [X_2,X_4]=X_5,$ \\
    & $ [X_2,X_6]=\lambda X_7,$ & $ [X_3,X_4]=(1-\lambda)X_7,  $ & $ I(\lambda)=\lambda+\lambda^{-1},$ \\ 
    & $ \lambda \neq 0,\pm 1. $   \\  
 (1357R) & $[X_1,X_2]=X_3,$ & $[X_1,X_3]=X_5,$ & $ [X_1,X_6]=X_7,$ \\
  & $ [X_2,X_3]=X_6,$ & $  [X_2,X_4]=X_6,$ & $ [X_2,X_5]=X_7, $ \\
    & $ [X_3,X_4]=X_7.$ \\     
(1357S) &  $[X_1,X_2]=X_3,$ & $[X_1,X_3]=X_5,$ & $ [X_1,X_5]=X_7,$ \\
  & $ [X_1,X_6]=X_7,$ & $  [X_2,X_3]=X_6,$ & $ [X_2,X_4]=X_6,$ \\
    & $ [X_2,X_5]=X_7,$ & $[X_2,X_6]=\lambda X_7,$ & $ [X_3,X_4]=X_7,$ \\ 
      & $ \lambda \neq 0,1. $ \\
\end{tabular}

\begin{center}
Upper Central Series Dimensions $ (13457) $
\end{center}
\begin{tabular}{ l l l l }
(13457C) & $[X_1,X_i]=X_{i+1}, i=2,3,4,$ & $[X_1,X_6]=X_7,$ & $ [X_2,X_5]=X_7, $ \\ 
    & $[X_3,X_4]=-X_7.  $ \\
(13457E) & $[X_1,X_i]=X_{i+1}, i=2,3,4, $ & $[X_1,X_6]=X_7,$ & $ [X_2,X_3]=X_5, $ \\
  & $ [X_2,X_5]=X_7, $ & $[X_3,X_4]=-X_7. $ \\
(13457G) & $[X_1,X_i]=X_{i+1}, i=2,3,4,$ & $[X_1,X_6]=X_7,$ & $ [X_2,X_3]=X_6, $ \\
  & $ [X_2,X_4]=X_7,$ & $ [X_2,X_5]=X_7,$ & $ [X_3,X_4]=-X_7. $ \\
(13457I) & $[X_1,X_i]=X_{i+1}, i=2,3,4,$ & $[X_1,X_5]=X_7, $ & $[X_2,X_3]=X_6,$ \\
  & $ [X_2,X_5]=X_7,$ & $ [X_2,X_6]=X_7,$ & $ [X_3,X_4]=-X_7. $ \\
\end{tabular} 

\begin{center}
Upper Central Series Dimensions $ (12457) $
\end{center}
\begin{tabular}{ l l l l }
(12457D) & $[X_1,X_i]=X_{i+1}, i=2,3, $& $[X_2,X_6]=X_7,$ & $ [X_2,X_5]=X_6,$ \\
  & $[X_1,X_i]=X_{i+2}, i=4,5, $ & $ [X_3,X_4]=-X_7.  $ \\
(12457E) & $[X_1,X_i]=X_{i+1}, i=2,3, $ & $ [X_1,X_4]=X_6,$ & $ [X_1,X_6]=X_7, $ \\
    & $[X_2,X_3]=X_6,$ & $ [X_2,X_4]=X_7,$ & $  [X_2,X_5]=X_6,$ \\
      & $ [X_3,X_5]=X_7.  $ \\
(12457G) & $[X_1,X_i]=X_{i+1}, i=2,3,$ & $ [X_1,X_4]=X_6,$ & $ [X_1,X_5]=X_7, $ \\
  & $[X_2,X_i]=X_{i+1},i=5,6, $ & $ [X_2,X_3]=X_6,$ & $  [X_3,X_4]=-X_7.  $ \\
(12457I) & $[X_1,X_i]=X_{i+1}, i=2,3,5,6, $ & $ [X_3,X_4]=X_7, $ & $ [X_2,X_5]=X_7, $ \\
  & $[X_2,X_j]=X_{j+2}, j=3,4.$ \\
(12457J) & $[X_1,X_i]=X_{i+1}, i=2,3,5,6, $ & $[X_1,X_4]=X_7,$ & $ [X_2,X_3]=X_5, $ \\
  & $[X_2,X_4]=X_6,$ & $ [X_2,X_5]=X_7,$ & $  [X_3,X_4]=X_7.  $ \\
(12457J$_1$) & $ [X_1,X_i]=X_{i+1}, i=2,3,5,6, $ & $ [X_1,X_4]=X_7,$ & $ [X_2,X_5]=-X_7, $ \\
  & $  [X_2,X_j]=X_{j+2}, j=3,4,   $ & $ [X_3,X_4]=X_7 $ \\
 (12457L) & $[X_1,X_i]=X_{i+1}, i=2,3,5,6, $ & $[X_3,X_4]=X_7,$ & $ [X_2,X_6]=X_7,$ \\
  & $[X_2,X_j]=X_{j+2}, j=3,4,$ & $[X_3,X_5]=-X_7. $ \\  
  \end{tabular}
  
  \begin{center}
  \end{center}
  \begin{tabular}{ l l l l }
 (12457N) & $[X_1,X_i]=X_{i+1}, i=2,3,5,6,$ & $  [X_1,X_4]=X_7,$ & $ [X_2,X_3]=X_5,$ \\
  & $[X_2,X_4]=X_6, $ & $[X_2,X_5]=\lambda X_7,$ & $ [X_2,X_6]=X_7,$ \\
    & $  [X_3,X_4]=X_7,$ & $ [X_3,X_5]=-X_7,  $ & $ I(\lambda)=\lambda + \lambda^{-1}, $ \\
      & $ \lambda \neq 0. $ \\ 
(12457N$_2$) & $ [X_1,X_i]=X_{i+1},i=2,3,$ & $ [X_1,X_4]=-X_6,$ & $ [X_1,X_5]=X_7,  $ \\
  & $[X_1,X_6]=X_7,$ & $[X_2,X_3]=X_5,$ & $[X_2,X_4]=X_7,$  \\
    & $[X_2,X_5]=-X_6+\lambda X_7,$ & $[X_3,X_5]=-X_7,$ & $\lambda \ge 0, \lambda \neq 1.$  \\
\end{tabular} 

\begin{center}
Upper Central Series Dimensions $ (12357) $
\end{center}
\begin{tabular}{ l l l l }
(12357C) & $[X_1,X_2]=X_4,$ & $ [X_1,X_i]=X_{i+1},i=4,5,6,$ & $ [X_2,X_3]=X_5,$ \\
  & $  [X_2,X_4]=X_7, $ & $ [X_3,X_4]=-X_6, $ & $[X_3,X_5]=-X_7 . $ \\
\end{tabular}

\begin{center}
Upper Central Series Dimensions $ (123457) $
\end{center}
\begin{tabular}{ l l l l }
(123457C) & $[X_1,X_i]=X_{i+1}, 2 \leq i \leq 6,$ & $ [X_2,X_5]=X_7,$ & $ [X_3,X_4]=-X_7 , $ \\
(123457F) & $[X_1,X_i]=X_{i+1}, 2 \leq i \leq 5, $ & $[X_1,X_6]=X_7,$ & $ [X_2,X_3]=X_6, $ \\
  & $ [X_2,X_4]=X_7, $ & $[X_2,X_5]=X_7,$ & $ [X_3,X_4]=-X_7.  $ \\
(123457I) & $ [X_1,X_i]=X_{i+1}, 2 \leq i \leq 5,$ & $ [X_1,X_6]=X_7,$ & $ [X_2,X_3]=X_5, $ \\
  & $ [X_3,X_4]=(1-\lambda)X_7,$ & $ [X_2,X_5]=\lambda X_7,$ & $ [X_2,X_4]=X_6,   $  \\
    & $ \lambda \neq 0,1. $ \\
\end{tabular} 
\end{theorem}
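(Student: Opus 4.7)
The plan is to verify the list via a case-by-case inspection of the classification of $7$-dimensional indecomposable nilpotent real Lie algebras due to Gong (with the corrections noted by Magnin). By \theoref{t:indecom}, no decomposable $7$-dimensional nilpotent Lie algebra admits a contact structure, so it suffices to restrict attention to the $138$ indecomposable algebras and $9$ one-parameter families on Gong's list.

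As a first pruning step, I would apply the necessary condition stated in the remark preceding the theorem: for a $1$-form $\eta=\sum a_l x_l$ to satisfy $\eta\wedge (d\eta)^3\neq 0$, the index set of the quadratic terms occurring in $d\eta$ must cover $\{1,\dots,6\}$ by three disjoint pairs. Concretely, there must exist nonzero structure constants $c_{i_1 j_1}^{k_1},c_{i_2 j_2}^{k_2},c_{i_3 j_3}^{k_3}$ with $\{i_1,j_1,i_2,j_2,i_3,j_3\}=\{1,\dots,6\}$. Direct inspection of the bracket data in Gong's classification eliminates $103$ of the $138$ indecomposable algebras on this basis alone, leaving exactly the $35$ algebras and $9$ families displayed in the theorem as the only surviving candidates.

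For each of the remaining $44$ candidates I would compute the Chevalley--Eilenberg differential from the bracket data via $dx_k=-\sum_{i<j}c_{ij}^k x_i x_j$, then take the candidate contact form $\eta=x_7$ (except for the algebra (12457L), where $\eta=x_7+x_6$), expand $(d\eta)^3$, and read off the coefficient of $x_1 x_2\cdots x_6$. The paper carries out this calculation in full for (1357C) and for the family (147E); every remaining case reduces to the same type of routine expansion. The coefficient of the top form $x_1\cdots x_7$ in $\eta\wedge(d\eta)^3$ is, up to a factor of $6$ and an overall sign, the sum over partitions of $\{1,\dots,6\}$ into three unordered pairs $\{i_s,j_s\}$ of the product $\prod_{s=1}^{3} c_{i_s j_s}^{7}$, which makes each individual verification a small combinatorial check.

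The main obstacle is the sheer volume of bookkeeping required by $44$ separate verifications, together with the need to identify the admissible parameter ranges for the $9$ families. Organizing the computation via the partition-sum formula above renders each case mechanical. For the families, the resulting coefficient becomes a polynomial in $\lambda$, and the listed restrictions (such as $\lambda\neq 0,1$ for (147E) or $\lambda>1$ for (147E$_1$)) arise precisely as the zero locus of that polynomial; writing these polynomials out case by case pins down the admissible values. Since every $7$-dimensional indecomposable nilpotent Lie algebra not appearing in the list is already ruled out by the necessary condition, and decomposable algebras are excluded by \theoref{t:indecom}, the resulting enumeration is both complete and non-redundant.
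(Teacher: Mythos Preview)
Your plan is essentially the paper's own argument: exclude decomposable algebras via \theoref{t:indecom}, discard $103$ indecomposable algebras by the combinatorial necessary condition, and verify the remaining $35$ algebras and $9$ families by computing $\eta\wedge(d\eta)^3$ with $\eta=x_7$ (respectively $\eta=x_6+x_7$ for (12457L)).

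One small correction: your claim that the parameter restrictions ``arise precisely as the zero locus of that polynomial'' is not accurate in every case. For instance, in (147E$_1$) the only partition of $\{1,\dots,6\}$ contributing to $(dx_7)^3$ is $\{1,6\},\{2,5\},\{3,4\}$, giving a top-form coefficient proportional to $\lambda^2$, so the contact condition excludes only $\lambda=0$; the listed restriction $\lambda>1$ is a fundamental-domain condition from Gong's classification, not from the contact computation. Similar remarks apply to the inequality-type restrictions in (12457N$_2$). This does not affect the validity of your verification, only your explanation of where the parameter constraints originate.
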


\begin{cor}\label{c:main}
For a $7$-dimensional nilpotent Lie algebra $\mathfrak{n}$ we denote by $ \mathcal{C}(\mathfrak{n}) $ the set of all nilmanifolds $N/\Gamma$ such that $\mathfrak{n}$ is isomorphic to the Lie algebra of a Lie group $N$.  Then $\bigcup_{\mathfrak{n} \in \mathcal{L}}  \mathcal{C}(\mathfrak{n}) $ is the set of all $7$-dimensional nilmanifolds that admit an invariant contact structure.
\end{cor}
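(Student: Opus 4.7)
The corollary follows by a direct two-inclusion argument that assembles \theoref{t:main} (the explicit list $\mathcal{L}$) together with \remref{r:invar} (which translates contact structures on nilmanifolds into contact structures on their Lie algebras). Since \theoref{t:main} is stated to be both \emph{complete} and \emph{non-redundant}, nothing further needs to be classified at the level of Lie algebras; all that remains is to pass from the Lie-algebra classification to the nilmanifold classification.

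For one direction, suppose $M=N/\Gamma$ is a 7-dimensional nilmanifold admitting an invariant contact structure. Then \remref{r:invar} tells us that the Lie algebra $\mathfrak{n}$ of $N$ admits a contact structure, and \theoref{t:main} says that the isomorphism class of $\mathfrak{n}$ must appear in $\mathcal{L}$. Hence $M \in \mathcal{C}(\mathfrak{n})$ for that $\mathfrak{n} \in \mathcal{L}$. Conversely, if $M=N/\Gamma \in \mathcal{C}(\mathfrak{n})$ for some $\mathfrak{n} \in \mathcal{L}$, then $\mathfrak{n}$ is the Lie algebra of $N$ and, by the defining property of $\mathcal{L}$ from \theoref{t:main}, admits a contact structure. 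A second application of \remref{r:invar} then produces an invariant contact structure on $M$. The two inclusions give the desired set equality.

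The one step that demands a small extra verification---though it is really a bookkeeping task once \theoref{t:main} is in hand---is confirming that each $\mathfrak{n} \in \mathcal{L}$ actually arises as the Lie algebra of some simply connected nilpotent Lie group $N$ possessing a discrete cocompact subgroup $\Gamma$, so that $\mathcal{C}(\mathfrak{n})$ is nonempty and the union on the left-hand side is not vacuously the empty set on any factor. This is where Malcev's criterion (\theoref{t:ma}) enters: a glance at the bracket tables in \theoref{t:main} shows that for every algebra in $\mathcal{L}$ all structure constants $c_{ij}^k$ are rational numbers, and for the parametrized families (147E), (147E$_1$), (1357M), (1357N), (1357QRS$_1$), (1357S), (12457N), (12457N$_2$), and (123457I), rational values of $\lambda$ satisfying the stated restrictions always exist. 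Consequently each $\mathcal{C}(\mathfrak{n})$ is nonempty, and $\bigcup_{\mathfrak{n} \in \mathcal{L}} \mathcal{C}(\mathfrak{n})$ exhausts the 7-dimensional nilmanifolds admitting an invariant contact structure.

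The argument has no substantive obstacle; the intellectual work is all contained in \theoref{t:main}, whose proof in the preceding section already combines \theoref{t:indecom} (to eliminate decomposable algebras), the enumeration in Gong's and Magnin's lists, and the explicit wedge-product computations of $\eta \wedge (d\eta)^3$ carried out using \corref{c:ce}. The corollary is simply the geometric reformulation of that algebraic classification.
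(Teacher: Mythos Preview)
Your two-inclusion argument in the first two paragraphs is correct and is exactly how the corollary follows from \theoref{t:main} and \remref{r:invar}; the paper gives no separate proof beyond labeling the statement a corollary.

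Your third paragraph, however, is both unnecessary and overclaims. The set equality in the corollary holds regardless of whether any individual $\mathcal{C}(\mathfrak{n})$ is empty: an empty factor simply contributes nothing to the union, so nonemptiness is irrelevant to the statement being proved. More importantly, your assertion that ``each $\mathcal{C}(\mathfrak{n})$ is nonempty'' is not established and in fact runs contrary to the paper's own discussion. In the parametrized families each admissible value of $\lambda$ determines a distinct algebra in $\mathcal{L}$; the existence of \emph{some} rational $\lambda$ only shows that \emph{some} algebras in each family yield nilmanifolds, not all of them. For irrational $\lambda$ the structure constants in the displayed basis are irrational, and whether another basis with rational structure constants exists is precisely what the paper, in the paragraph immediately following the corollary, declares to be ``not clear to us'' and in need of ``more research.'' You should therefore drop the nonemptiness claim entirely.
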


\forget
\begin{rem} Note that two nilmanifolds correspond to isomorphic rational Lie algebras if and only if they are finite-sheeted coverings for a third nilmanifold, ~\cite{Ma}.
\end{rem}
\forgotten

\m To see which of these Lie algebras yield a nilmanifold, we must check whether a Lie algebra from the list admits a basis with respect to which all structure constants are rational. From the above list, we see that, definitely, the following Lie algebras satisfy this condition: $(17), (157), $ $(147(A, A_1 ,B,D)), (1457), (137),   (1357(A, C, D, F,  F_1, H, J, L,P, P_1,  R))$, \\ $(13457), $ $(12457(D,E, G, I, J, J_1, L)),  (12357), $ $(123457(C,F))$. 

Concerning the remaining cases, note that the one-parameter families of Lie algebras with a rational parameter $\lambda$ give us the nilmanifolds. The case of an irrational parameter $\lambda$ is not clear to us. In principle, it can happen that such  a Lie algebra has a basis with respect to which all  structure constants are rational. This needs more research.

\section{The case of $\dim <7$}
 For completeness we discuss the existence of invariant contact structures on nilmanifolds of dimension $3$ and $5$. Note that all nilpotent Lie algebras of dimension up to 6 have been classified  ~\cite{Gr}. In dimension $5$ there are $9$ nonisomorphic nilpotent Lie algebras and there are $3$ of them that admit a contact structure $\eta=x_5$. We shall use the same notation as in ~\cite{Gr}. \\
\begin{tabular}{ l l l l }
L$_{5,1}$ & $[X_1,X_2]=X_5,$ & $[X_3,X_4]=X_5,$ &   \\
L$_{5,3}$ & $[X_1,X_2]=X_4,$ & $[X_1,X_4]=X_5,$ & $[X_2,X_3]=X_5$  \\
L$_{5,6}$ & $[X_1,X_2]=X_3,$ & $[X_1,X_3]=X_4,$ & $[X_1,X_4]=X_5$  \\
  &  $[X_2,X_3]=X_5 $ \\
\end{tabular} 

In dimension $3$ there are only two non-isomorphic nilpotent Lie algebras. One of them is abelian and therefore does not admit a contact structure. Hence  a torus $ \displaystyle{T=\mathbb{R}^3/\mathbb{Z}^3}$ does not admit an invariant contact structure. Another $3$-dimensional nilpotent Lie algebra has nontrivial Lie bracket $[X_1,X_2]=X_3$ and therefore admits a contact structure $\eta=x_3$. Hence the Heisenberg manifold is the only $3$-dimensional nilmanifold that admits an invariant contact structure.


\providecommand{\bysame}{\leavevmode\hbox to3em{\hrulefill}\thinspace}
\providecommand{\MR}{\relax\ifhmode\unskip\space\fi MR }
\providecommand{\MRhref}[2]{%
  \href{http://www.ams.org/mathscinet-getitem?mr=#1}{#2}
}
\providecommand{\href}[2]{#2}
\begin{thebibliography}{}

\end{thebibliography}


\begin{thebibliography}{[IRTU]}\bibliographystyle{amsalpha}

\bibitem[AG]{AG}
J.M. Ancochea-Bermudez, M. Goze. 
Classification des algebres de Lie nilpotentes complexes de dimension 7.
{ \it Lie Arch. Math.},
{\bf{52}(2)}  (1989), 157-185.

\bibitem[B]{B}
N. Bourbaki.   
{ \it Lie groups and Lie algebras}.
Hermann, Paris 1975.
  
\bibitem[C]{C} 
R. Carles.
{\it Weight systems for nilpotent Lie algebras of dimension $\leq 7$ over complex field.}
Technical report, Departement de Mathematiques, Universite de Poitiers, 1989.
  
           
\bibitem[CE]{CE} C. Chevalley, S. Eilenberg. 
Cohomology theory of Lie groups and Lie algebras.
{\it Trans. Amer. Math. Soc. },
{\bf {63}}
(1948), 85--124.

\bibitem[DGMS]{DGMS}
P. Deligne, P. Griffiths, J. Morgan, D. Sullivan.
Real homotopy theory of K\"ahler manifolds.
{\it Inventiones Math.},
{\bf 29} (1975), 245--274.

\bibitem[E]{E}
J. Etnyre.
Lectures on open book decompositions and contact structures. In: {\it  Floer homology, gauge theory, and low-dimensional topology}, 103--141,
Clay Math. Proc., 5, Amer. Math. Soc., Providence, RI, 2006. 

\bibitem[FHT]{FHT} 
Y. Felix, S. Halperin, and J.-C. Thomas. {\it Rational Homotopy Theory}, Springer, 2000.

\bibitem[G]{G} M.-P. Gong.
Classification of nilpotent Lie algebras of dimension 7 over algebraically closed fields and $\mathbb{R}$,
Ph.D thesis, University of Waterloo, 1998. 173 pp.

\bibitem[Gr]{Gr} W. A. de Graaf,
Classification of 6-dimensional nilpotent Lie algebras over fields of characteristic not 2.
{ \it Journal of Algebra },
{ \bf {309}}
(2007), 640-653.

\bibitem[M]{M} L. Magnin.
Determination of 7-dimensional indecomposable nilpotent complex Lie algebras by adjoining a derivation to 6-dimensianal lie algebras.
{ \it  Algebr. Represent. Theory},
{ \bf{13}} (2010), no. 6, 723-753.

\bibitem[Ma]{Ma} A. I. Malcev.
On a class of homogeneous spaces, 
{ \it Amer. Math. Soc. Translation }, 
{ \bf{39}} (1951),  33 pp. 

\bibitem[MS]{MS}
D. McDuff, D. Salamon.
{\it Introduction to Symplectic Topology}. 
Clarendon Press, Oxford 1998.

\bibitem[N]{N} K. Nomizu.
On the cohomology of compact homogeneous spaces of nilpotent Lie groups,
{\it Annals of Math.},
{\bf{59}}, no. 2, (1953) 531--538.

\bibitem[R]{R} M. Romdhani. 
Classification of real and complex nilpotent Lie algebras of dimension 7,
{\it Linear and Multilinear Algebra},
{\bf{24}}, no. 2, (1989) 167--189.




\bibitem[S]{S} E. N. Safiullina.
Classification of nilpotent Lie algebras of dimension 7,
{\it Math. Mech. Phys.},
 (1964) 66--69.

\bibitem[Se1]{Se1} C. Seeley.
{\it Seven-dimensional nilpotent Lie algebras over the complex numbers.}
PhD thesis, University of Illinois at Chicago, 1988.

\bibitem[Se2]{Se2} C. Seeley.
7-dimensional nilpotent Lie algebras.
{\it Trans. Amer. Math. Soc.},
{\bf{335}} (1993), 479-496.

 
\bibitem[TO]{TO} A. Tralle and J. Oprea.
{\it Symplectic manifolds with no K\"ahler structure}, 
Lect. Notes Math. 1661, Springer, Berlin 1997.

\end{thebibliography}
\end{document}